\newtheorem{theorem}{Theorem}
\newtheorem{lemma}{Lemma}
\newtheorem{corollary}{Corollary}
\newtheorem{remark}{Remark}
\newcommand{\B}{\mathbb{B}}
\newcommand{\R}{\mathbb{R}}
\newcommand{\N}{\mathbb{N}}
\newcommand{\Z}{\mathbb{Z}}
\newcommand{\intcc}[1]{\ensuremath{{\left[#1\right]}}}
\newcommand{\intco}[1]{\ensuremath{{\left[#1\right[}}}
\newcommand{\intoo}[1]{\ensuremath{{\left]#1\right[}}}
\DeclareMathOperator{\pre}{pre}
\renewcommand{\emptyset}{{\varnothing}}
\date{}
\title[Computing Robust Controlled Invariant Sets of Linear Systems]{Computing Robust Controlled Invariant Sets\\ of Linear Systems}
\author{Matthias Rungger}
\author{Paulo Tabuada}
\keywords{Invariance, Viability, Infinite Reachability, Safety Properties, Finite
Termination, $\delta$-Decidability}
\begin{document}

\maketitle

\begin{abstract}
We consider controllable linear discrete-time systems with bounded perturbations and 
present two methods to compute robust controlled invariant sets. The first method tolerates an arbitrarily small constraint violation to
compute an arbitrarily precise outer approximation of the maximal robust
controlled invariant set, while the second method provides an inner
approximation.  The outer approximation scheme is
$\delta$-complete, given that the constraint sets are formulated as finite
unions of polytopes.
\end{abstract}

\section{Introduction}

Let us consider two matrices \mbox{$A\in \R^{n\times n}$},
\mbox{$B\in\R^{n\times m}$} with \mbox{$m\le n$} and a nonempty
set $ W\subseteq\R^n$.
Throughout this note, we analyze linear, time-invariant, discrete-time systems with
additive perturbations  described by the difference inclusion
\begin{IEEEeqnarray}{c'c}\label{e:sys}
  \xi(t+1)\in A\xi(t)+B\nu(t)+W, &\quad W\neq\emptyset
\end{IEEEeqnarray}
where \mbox{$\xi(t)\in \R^n$} and \mbox{$\nu(t)\in \R^m$} is
the \emph{state signal}, respectively, \emph{input signal} and $W$ is the set of 
\emph{disturbances}.  Here we slightly abuse notation and use $x+W$ instead of
$\{x\}+W$ to denote the Minkowski set
addition defined for two sets $P,Q\subseteq \R^n$ by $Q+P=\{y\in\R^n\mid
\exists_{q\in Q},\exists_{p\in P}\; y=q+p\}$.

In addition to
the dynamics, we consider
\emph{state constraints}  and
\emph{input constraints}  given by the compact sets
\begin{IEEEeqnarray}{c't'c}\label{e:constraints}
X\subseteq \R^n & and & U\subseteq \R^m.
\end{IEEEeqnarray}

We are interested in the computation of \emph{feedbacks}
that map states to admissible inputs 
\begin{IEEEeqnarray}{c}\label{e:fb}
  \mu:\R^n\rightrightarrows U 
\end{IEEEeqnarray}
which force the trajectories of~\eqref{e:sys} to evolve inside the state
constraint set $X$.  The double-arrow notation $\rightrightarrows$
indicates that $\mu$ is \emph{set-valued}, i.e., for $x\in \R^n$, the
image $\mu(x)$ is a subset of $U$, see~\cite[Ch.~5]{RockafellarWets09}.
Subsequently, we use $\mathcal{F}(U)$ to denote the set of all feedbacks that
satisfy for all $x\in X$: $u\in \mu(x)$ implies for all $x'\in Ax+Bu+W:\; \mu(x')\neq\emptyset$.

A {\it trajectory} of
\eqref{e:sys} and 
$\mu\in\mathcal{F}(U)$, with \emph{initial state} \mbox{$x\in \R^n$}, is a sequence
$\xi:\Z_{\ge0}\to \R^n$ that satisfies $\xi(0)=x$ and for which there exists $\nu:\Z_{\ge0}\to \R^m$ so that
$\nu(t)\in \mu(\xi(t))$ and~\eqref{e:sys} hold for all $t\in\Z_{\ge0}$.

It is well-known~\cite{Ber72} that the feedbacks of interest,   i.e., the maps $\mu$ that force every trajectory  of~\eqref{e:sys} and $\mu$ to evolve
inside $X$ for all time,  are
characterized by the maximal robust controlled invariant
set~\mbox{\cite{Blanchini99,BM08}},
also known as infinite reachable set~\cite{Ber72} or
discriminating kernel \mbox{\cite{Aub91,Cardaliaguet96}},
contained in $X$.

A set $R\subseteq \R^n$ is called \emph{robust controlled invariant}
w.r.t.~\eqref{e:sys} and~$U$, if there exists a feedback
  $\mu\in\mathcal{F}(U)$ so that for every trajectory $\xi$ of~\eqref{e:sys} and
  $\mu$ with initial state $\xi(0)\in R$ we have $\xi(t)\in R$ for all times
  $t\in\N$.
  We use $R(X)$ to denote the \emph{maximal robust controlled invariant set
  of~\eqref{e:sys} and~$U$} defined as the largest
  robust controlled invariant subset of $X$.\footnote{Note that
  the invariance property of a set is closed under union so that the maximal
  controlled invariant set is well-defined.}

Given $R(X)$, the map $C:\R^n\rightrightarrows U$ defined by
\begin{IEEEeqnarray}{c}\label{e:controller}
  C(x)=\{ u\in U\mid Ax+Bu +W\subseteq R(X) \}
\end{IEEEeqnarray}
characterizes all feedbacks of interest in the following sense:
Suppose that a feedback $\mu\in\mathcal{F}(U)$ enforces the
constraints $X$ on the system~\eqref{e:sys}, i.e., every trajectory $\xi$
of~\eqref{e:sys} and $\mu$ satisfies $\xi(t)\in X$ for all $t\in\Z_{\ge0}$, then we have 
for all $x\in \R^n$ the inclusion $\mu(x)\subseteq C(x)$, see e.g.~\cite[Thm.~1]{VSLS00}.
Therefore, it is sufficient to determine $R(X)$, whenever one is
interested in feedbacks that enforce the
constraints  $X$ and $U$ on~\eqref{e:sys}. 

Even though, set invariance 
has a rich history, see e.g.~\cite{GloverSchweppe71,Ber72,Cardaliaguet96,Blanchini94}, the computation of
$R(X)$ for most types of constraint sets $X$ and $U$, e.g. when $X$ and $U$ are given as a union of polytopes,  is still an open problem.
In this note, we propose two algorithms to compute an outer, respectively, inner
invariant approximation of the maximal robust controlled invariant set. Both
algorithms are obtained as modifications of the well-known dynamic programming approach to
the computation of the infinite reachable set~\cite{Ber72,Blanchini94,BM08}. Before we
provide a more detailed description of our contribution,
we review the state of the art on the computation of invariant sets of
linear systems.

Let $\pre(R)=\{x\in \R^n\mid \exists_{u\in U} Ax+Bu+W\subseteq R\}$
denote the set of states that are mapped into $R$ by the dynamics when the input
is appropriately chosen. In~\cite{Ber72}, Bertsekas introduced the iteration
\begin{IEEEeqnarray}{c'c}\label{e:iteration}
\begin{IEEEeqnarraybox}[][c]{c'c}
  R_0=X, &
  R_{i+1}=\pre(R_i)\cap X
\end{IEEEeqnarraybox}
\end{IEEEeqnarray}
and showed, 
that for every open set $\Omega$ that contains $R(X)$, there exists
$j$ so that $R_{j}\subseteq \Omega$ for all $j\ge i$, provided that the
sets $R_i$ are nonempty. In our case, this implies the convergence
\begin{IEEEeqnarray}{c}\label{e:conv}
  R(X)=\lim_{i\to\infty} R_i
\end{IEEEeqnarray}
with respect to the Hausdorff distance. See also~\cite{GloverSchweppe71}.

The set convergence~\eqref{e:conv} shows that the maximal robust controlled invariant set $R(X)$
can, in principle, be outer approximated by the sequence $(R_i)_{i\in\Z_{\ge0}}$ with arbitrary precision. Nevertheless, even if 
the sets $R_i$ are computable, the approximation is not very
useful since in general the sets $R_i$ are not robust
controlled invariant and it
is not possible to derive a feedback from any $R_i$ that ensures that the system
always evolves inside the state constraint set.

However, in some cases it is possible to determine the maximal robust
controlled invariant set by the
iteration~\eqref{e:iteration}. If there exists $i\in\Z_{\ge0}$ so that two consecutive
iterations in~\eqref{e:iteration} result in equal sets, i.e., $R_{i+1}=R_i$,
then $R_i=R(X)$.
In this case, we
say that $R(X)$ is \emph{finitely
determined}~\cite[Lem.~2.1]{KerriganMaciejowski00}. 
Depending on the dynamics $(A,B)$ and the shape of $X$, $U$ and  $W$ there exist
conditions which ensure that $R(X)$ is finitely
determined, see~\cite{VSLS00}. A large class of cases is covered by the
following conditions. Suppose that $(A,B)$ is \emph{controllable},  i.e., the
controllability matrix $[B,\,AB,\ldots ,A^{n-1}B]$ has full rank~\cite{AM07}, then 
without loss of generality, we may assume that the system is in Brunovsky normal
form, also known as Controller Form, see~\cite[Sec.~6.4.1]{AM07}. In this
representation, if $W=\{0\}$ and the sets $X$ and $U$ are given by a finite
union of {\it hyper-rectangles}, then 
the maximal control
invariant set is finitely determined, see~\cite{VSLS00,TP06,RMT13}. 

Unfortunately, for one of the most popular settings, where $(A,B)$ is assumed to
be controllable, $W=\{0\}$ and
the sets $X$ and $U$ are assumed to be polytopes
with the origin in the
interior, $R(X)$ is not finitely determined. Nevertheless, in this case, one can
modify the iteration~\eqref{e:iteration} and set $R_0=\{0\}$ (instead
of $R_0=X$). As a result, each set $R_i$ is controlled invariant and in fact
$R_i$ is the $i$-step null-controllable
set~\cite{GutmanCwikel84,DarupMonnigmann14} and the union of the sets $R_i$
converges to the \emph{largest null-controllable set} $N(X)$, i.e., the set of
all initial states from which the system can be forced to the origin in finite
time without violating the constrains.
As~$R_i$ converges to the maximal null controllable set $N(X)$ and the closure
of $N(X)$ equals $R(X)$, see~\cite[Prop.~1]{DarupMonnigmann14}, the
iteration~\eqref{e:iteration} with $R_0=\{0\}$ provides an algorithm for the 
arbitrarily precise (inner) approximation of $R(X)$, with the considerable advantage
that the approximation is robust controlled invariant. Moreover, this approach provides a
so-called \emph{anytime algorithm}, i.e., for each iteration $i\in\Z_{\ge0}$ the set $R_i$
is controlled invariant and a feedback can be derived, which enforces the
trajectories of~\eqref{e:sys} with initial state in $R_i$ to
evolve inside the constraint set $X$. Additionally, due to the convergence of $R_i$,
the mismatch between $R_i$ and $R(X)$ decreases as the computation continues.

An alternative modification of the iteration~\eqref{e:iteration},  which also
provides an invariant approximation of $R(X)$ and is not restricted to $W=\{0\}$, is presented
in~\cite{Blanchini94, BlanchiniMesquineMiani95} and~\cite[Sec.~5.2]{BM08}. 
The set iteration, with initial set $X$, is given by 
\begin{IEEEeqnarray}{c'c}\label{e:contraction}
  R_0=X,&R_{i+1}=\pre(\lambda R_i)\cap X
\end{IEEEeqnarray}
for some contraction factor
$\lambda\in\intoo{0,1}$, where  $\lambda P$
  for $\lambda\in\R_{\ge0}$ and $P\subseteq\R^n$ is defined by $ \lambda
P=\{x\in\R^n\mid \exists_{p\in P}\; x=\lambda p\}$.
The computation of $(R_i)_{i\in\Z_{\ge0}}$ terminates, once the inclusion
$R_i\subseteq \hat\lambda/\lambda R_{i+1}$ holds for $\hat \lambda\in\intoo{\lambda,1}$.
Given that $X$ contains a $\lambda$-contractive 
convex set (see \cite[Def.~4.18]{BM08}) with the origin
in its interior, it is shown in~\cite[Prop.~5.9]{BM08} that there exists
$i\in\Z_{\ge0}$ so that the termination condition is satisfied $R_i\subseteq
\hat\lambda/\lambda R_{i+1}$  and $R_i$ is robust controlled invariant, in fact $R_i$ is
$\hat\lambda$-contractive, see also~\cite[Thm.~3.2]{Blanchini94}.

In this note, we assume that the dynamics $(A,B)$ are controllable and the
constraint sets $X$ and $U$ are compact. Under these assumptions, we
 provide two novel results for the outer as well as inner
approximation of $R(X)$.
For the outer invariant approximation of $R(X)$, we use the set iteration~\eqref{e:iteration}
and modify the stopping criterion in
\cite[Eq.~(5.17)]{BM08} to
\begin{IEEEeqnarray}{c}\label{e:stop1}
R_i\subseteq R_{i+n}+\varepsilon\B,
\end{IEEEeqnarray}
 where $\B$ denotes the closed unit ball in $\R^n$ w.r.t. to the
infinity norm $|\cdot|$.
We show that for every $\varepsilon\in\R_{>0}$ there exists an $i\in\Z_{\ge0}$
so that~\eqref{e:stop1} holds. Based on the set $R_{i+n}$, we derive a
{\it $\delta$-relaxed robust control
invariant set $R$}, i.e., $R(X)\subseteq R\subseteq X+\delta\B$ and $R$ is robust
controlled invariant w.r.t.~\eqref{e:sys} and $U+\delta\B$. Here $\delta=c\varepsilon$, where $c\in\R_{\ge0}$ is a constant that is known a-priori and the relaxation of the
constraints can be made arbitrarily small by choosing an appropriate
\mbox{$\varepsilon\in\R_{>0}$}. Moreover, we show that the set $R$ converges to
$R(X)$ as $\varepsilon$ decreases to
zero.  Note that this approach can also be used in an anytime scheme. In that situation, at each
iteration $i\ge n$, we determine $\varepsilon\in\R_{\ge0}$ so
that~\eqref{e:stop1} holds. If the constraint relaxation $\delta$
is tolerable, we stop the computation, otherwise, we continue with
$R_{i+1}$.

For the inner invariant approximation of $R(X)$, we modify the 
iteration~\eqref{e:contraction} to
\begin{IEEEeqnarray}{c'c}\label{e:miter}
  R_0=X,& R_{i+1}=\pre_\rho(R_i)\cap X
\end{IEEEeqnarray}
where  the map $\pre_\rho$ is defined for $\rho\in\R_{\ge0}$ by
\begin{IEEEeqnarray}{C}\label{e:prerho}
  \pre_\rho(R)=\{x\in\R^n\mid \exists_{u\in U}Ax+Bu+W+\rho\B\subseteq R\}.
  \IEEEeqnarraynumspace
\end{IEEEeqnarray}
Given $\rho\in \R_{>0}$, we show  that there exists $i\in\Z_{\ge0}$ so that
$R_{i}\subseteq R_{i+1}+\rho\B$ holds and that $R_{i+1}$ is robust controlled
invariant.  Moreover, we provide conditions which ensure that
$R_{i+1}$ is nonempty.
Although, the modification from
$\pre(\lambda P)$ to $\pre_\rho(P)$ is
rather straightforward, it has substantial effects. Not only allows this
modification to extend the idea of $\lambda$-contractive sets~\cite{Blanchini94}
from convex sets to non-convex sets,  but it also removes the requirement that
$X$ contains a convex $\lambda$-contractive set that contains the origin in its
interior.

In summary, compared to existing
approaches, we do not assume that the state constraint set contains a $\lambda$-contractive convex
set with the origin in its
interior~\cite{Blanchini94,BlanchiniMesquineMiani95,BM08}, nor do we impose any restrictions on the shape of the constraint
sets~\cite{VSLS00,TP06,RMT13}, 
neither do we assume $W=\{0\}$~\mbox{\cite{GutmanCwikel84,DarupMonnigmann14}},
but simply consider compact constraint sets and general disturbance sets.
Specifically,  
we allow sets given by finite unions of polytopes, i.e., the sets 
\mbox{$X_i\subseteq\R^n$},  \mbox{$U_j\subseteq\R^m$},
\mbox{$W_k\subseteq\R^n$}
with $i\in\intcc{1;I}$, $j\in\intcc{1;J}$,  $k\in\intcc{1;K}$ and \mbox{$I,
J,K\in\N$} are polytopes and
\begin{IEEEeqnarray}{c'c'c}\label{e:poly:constraints}
  X=\bigcup_{i\in\intcc{1;I}}X_i, &
  U=\bigcup_{j\in\intcc{1;J}} U_j,  &
  W=\bigcup_{k\in\intcc{1;K}} W_k.
  \IEEEeqnarraynumspace
\end{IEEEeqnarray}
In this case, the sets $(R_i)_{i\in\Z_{\ge0}}$ are
computable~\cite[Sec.~III.B]{RakovicKerriganMayneLygeros06}
and the proposed scheme for the  
outer invariant approximation is
$\delta$-complete~\cite{GaoAvigadClarke12}: Let $\delta\in\R_{>0}$, $(A,B)$ be controllable and 
$X$, $U$, $W\neq\emptyset$ be defined in~\eqref{e:poly:constraints}, then
the proposed algorithm either returns an empty set $R_{i+n}$, in which case
the set $R(X)$ is empty, or we obtain a $\delta$-relaxed robust controlled invariant set $R$.

We would like to point out that constrains sets in the form of~\eqref{e:poly:constraints} arise in a variety of
different situations, see e.g.~\cite{DallalColomboDelVecchioLafortune13}, 
and are particularly important in the synthesis problems with respect to 
safe linear temporal logic specifications~\cite{RMT13}.

\section{Outer Invariant Approximation}
\label{s:oa}

We begin with a lemma which shows that the stopping
criterion~\eqref{e:stop1} is valid. 

\begin{lemma}\label{l:termination}
Consider the system~\eqref{e:sys} and the
compact constraint sets in~\eqref{e:constraints}. Let
$(R_i)_{i\in\Z_{\ge0}}$ be defined
according to~\eqref{e:iteration}. Then for any $\varepsilon\in\R_{>0}$ there exists $i\in\Z_{\ge0}$
so that~\eqref{e:stop1} holds. 
\end{lemma}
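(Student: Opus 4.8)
The plan is to deduce the stopping criterion~\eqref{e:stop1} directly from the Hausdorff convergence~\eqref{e:conv}, which is already available from Bertsekas' analysis of the iteration~\eqref{e:iteration}. The inclusion $R_i\subseteq R_{i+n}+\varepsilon\B$ is nothing but the statement that the one-sided distance from $R_i$ to $R_{i+n}$ is at most $\varepsilon$; since this one-sided distance is dominated by the symmetric Hausdorff distance $\dH(R_i,R_{i+n})$, it suffices to show that $\dH(R_i,R_{i+n})$ can be made smaller than $\varepsilon$ for a suitable index $i$. I would obtain the latter from~\eqref{e:conv} together with the triangle inequality for $\dH$.

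First I would record the two structural facts that make the Hausdorff machinery applicable. Since $\pre$ is monotone with respect to set inclusion and $R_1=\pre(X)\cap X\subseteq X=R_0$, an induction gives the nesting $R_{i+1}\subseteq R_i$, and in particular $R_{i+n}\subseteq R_i$ for every $i$. Moreover, writing $R_{i+1}$ as the projection onto the $x$-coordinate of the closed set $\{(x,u)\mid Ax+Bu+W\subseteq R_i\}$ intersected with the compact set $X\times U$, one sees by induction that each $R_i$ is compact, so that $\dH$ is a genuine metric on the members of the sequence. These observations split the argument into two cases according to whether the iteration ever reaches the empty set.

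In the degenerate case, suppose $R_{i_0}=\emptyset$ for some $i_0$. By the nesting, $R_i=\emptyset$ for all $i\ge i_0$, so for any such $i$ both sides of~\eqref{e:stop1} are empty and the inclusion holds trivially. In the main case, all $R_i$ are nonempty, so~\eqref{e:conv} applies and $R_i\to R(X)$ in the Hausdorff metric. The triangle inequality then yields $\dH(R_i,R_{i+n})\le \dH(R_i,R(X))+\dH(R(X),R_{i+n})$, and both summands tend to $0$ as $i\to\infty$ (the second because $i+n\to\infty$). Hence there is an $i$ with $\dH(R_i,R_{i+n})<\varepsilon$; since each point $x\in R_i$ then lies within distance strictly less than $\varepsilon$ of the compact set $R_{i+n}$, the infimum defining that distance is attained at some $y\in R_{i+n}$ with $x-y\in\varepsilon\B$, so $x\in R_{i+n}+\varepsilon\B$, which is precisely~\eqref{e:stop1}.

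I expect the only genuinely delicate points to be bookkeeping rather than conceptual: carefully isolating the empty-set case, in which the convergence statement~\eqref{e:conv} does not directly apply and one must argue by hand, and justifying that the iterates are compact so that~\eqref{e:conv} and the triangle inequality may legitimately be invoked. Note that the value $n$ plays no special role in this lemma---the same argument works verbatim for any fixed offset---its relevance surfacing only later, when $R_{i+n}$ is used to construct the $\delta$-relaxed robust controlled invariant set.
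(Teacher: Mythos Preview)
Your proposal is correct and follows essentially the same approach as the paper: both separate out the degenerate case where some $R_i$ is empty, and in the nonempty case both invoke the Hausdorff convergence~\eqref{e:conv} to conclude~\eqref{e:stop1}. The only difference is mechanical: the paper avoids the triangle inequality by observing directly that $R(X)\subseteq R_j$ for every $j$, so once $R_i\subseteq R(X)+\varepsilon\B$ one immediately gets $R_i\subseteq R_{i+n}+\varepsilon\B$; your route through $\dH(R_i,R_{i+n})\le \dH(R_i,R(X))+\dH(R(X),R_{i+n})$ reaches the same conclusion with one extra step.
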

\begin{proof}
  Since $R_i=\emptyset$ implies $R_j=\emptyset$ for all $j\ge
  i$, the assertion trivially holds since~\eqref{e:stop1} results in $\emptyset\subseteq \emptyset$
  for $i\in
  \Z_{\ge0}$ with $R_i=\emptyset$ and subsequently we assume $R_i\neq\emptyset$ for
  all  $i\in \Z_{\ge0}$.
  From~\eqref{e:conv} follows that there exists
$i'\in\Z_{\ge0}$
so that for all $i\ge i'$ we have $R_i\subseteq R(X)+\varepsilon\B$ and we
obtain $R_i\subseteq R_j+\varepsilon\B$ for any $j\in\Z_{\ge0}$ which
shows~\eqref{e:stop1}.
\end{proof}

In the following, we make use of {\it $\delta$-constraint $i$-step
null-controllable sets}
$N^\delta_i\subseteq \R^n$, i.e., the set of initial states from which 
the unperturbed system $\xi(t+1)=A\xi(t)+B\nu(t)$ can be forced to the origin while satisfying
the input and state constraints $U=\delta\B$ and $X=\delta\B$. Let $\delta\in\R_{>0}$, then we define the sequence
of sets $(N^\delta_i)_{i\in\Z_{\ge0}}$ recursively by 
\begin{IEEEeqnarray}{c}\label{e:nullcontrollable}
\begin{IEEEeqnarraybox}[][c]{rCl}
N^\delta_0&=&\{0\},\\ 
N^\delta_{i+1}&=&\{x\in\R^n\mid \exists_{u\in \delta\B}\;
Ax+Bu\in N^\delta_i\}\cap \delta \B.
\end{IEEEeqnarraybox}
\end{IEEEeqnarray}
Note that for a fixed $\delta\in\R_{>0}$ it is straightforward to compute the
sets $(N^\delta_i)$ by polyhedral projection and
intersection~\cite{BM08}.
We use the following technical lemma about $\delta$-constraint $i$-step
null-controllable sets.

\begin{lemma}\label{l:bounds}
Consider the system \eqref{e:sys} with $W=\{0\}$. Let $N_n^\delta$ be defined
according to~\eqref{e:nullcontrollable}. Suppose that $(A,B)$ is
controllable, then 
\begin{IEEEeqnarray}{c'c't}\label{e:delta}
\exists_{c\in\R_{>0}}\;
\forall_{\varepsilon\in \R_{>0}}:&
\varepsilon \B \subseteq N^\delta_n & with $\delta=c\varepsilon$. 
\end{IEEEeqnarray}
\end{lemma}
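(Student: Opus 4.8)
The plan is to unfold the recursive definition~\eqref{e:nullcontrollable} and reduce the membership $x\in N_n^\delta$ to the existence of a sufficiently small control sequence that drives $x$ to the origin in exactly $n$ steps; controllability guarantees such a sequence exists, and linearity lets me control its magnitude uniformly in $\varepsilon$. Concretely, iterating~\eqref{e:nullcontrollable} shows that $x\in N_n^\delta$ precisely when there is a trajectory $x=x_0,x_1,\dots,x_n$ of the unperturbed dynamics $x_{t+1}=Ax_t+Bu_t$ with $x_n=0$, with $|u_t|\le\delta$ for $t=0,\dots,n-1$, and with $|x_t|\le\delta$ for $t=0,\dots,n-1$.

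First I would write out the terminal state, $x_n=A^nx_0+\mathcal{C}\mathbf{u}$, where $\mathcal{C}=[B,AB,\dots,A^{n-1}B]$ is the controllability matrix and $\mathbf{u}=(u_{n-1}^{\mathsf T},\dots,u_0^{\mathsf T})^{\mathsf T}$ is the stacked input. Requiring $x_n=0$ is then equivalent to solving the linear system $\mathcal{C}\mathbf{u}=-A^nx$. Since $(A,B)$ is controllable, $\mathcal{C}$ has full row rank $n$ and this system is solvable for every $x$; I would select the minimum-norm solution $\mathbf{u}=\mathcal{C}^{\mathsf T}(\mathcal{C}\mathcal{C}^{\mathsf T})^{-1}(-A^nx)$, which depends linearly on $x$. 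Hence there is a constant $c_1>0$, depending only on $(A,B,n)$, with $|u_t|\le c_1|x|$ for every $t$. Substituting this control back into the state recursion, each intermediate state $x_t=A^tx+\sum_{k<t}A^{t-1-k}Bu_k$ is again a fixed linear function of $x$, so there is a constant $c_2>0$ with $|x_t|\le c_2|x|$ for $t=0,\dots,n-1$. Setting $c=\max\{1,c_1,c_2\}$ and $\delta=c\varepsilon$, any $x$ with $|x|\le\varepsilon$ then satisfies $|x_t|\le c_2\varepsilon\le\delta$ and $|u_t|\le c_1\varepsilon\le\delta$, so the constructed trajectory certifies $x\in N_n^\delta$, giving $\varepsilon\B\subseteq N_n^\delta$ as claimed.

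The crux — and the reason the index $n$ appears — is that controllability yields exact reachability of the origin in $n$ steps; everything else reduces to bounding linear maps. The one point needing care is that $c$ must be chosen independently of $\varepsilon$, which is immediate here: both the control $\mathbf{u}$ and the resulting states depend linearly on $x$, so they scale proportionally to $|x|\le\varepsilon$ with proportionality constants fixed by $(A,B)$ alone. I would also remark that passing between the infinity norm $|\cdot|$ underlying $\B$ and the Euclidean norm implicit in the pseudoinverse only alters these constants by dimension-dependent factors, which is harmless.
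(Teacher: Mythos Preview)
Your proof is correct and follows essentially the same route as the paper: unfold the recursion to characterize $N_n^\delta$ via a deadbeat trajectory, use full rank of the controllability matrix $\mathcal{C}$ to build a control law linear in $x$, and then bound both inputs and intermediate states by constants depending only on $(A,B)$. The only cosmetic difference is that the paper selects $n$ linearly independent columns of $\mathcal{C}$ and inverts that square submatrix (setting the remaining input components to zero), whereas you take the Moore--Penrose pseudoinverse; either choice yields a linear right inverse of $\mathcal{C}$, so the resulting bounds are of the same form.
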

\begin{proof}
We show that there exists $c\in \R_{>0}$ such that for
every $x\in\R^n$ there exists $\nu:\intco{0;n}\to \R^m$ so that the trajectory
of $\xi(t+1)=A\xi(t)+B\nu(t)$ with $\xi(0)=x$ satisfies $\xi(n)=0$, and for all
$t\in\intco{0;n}$ we have $|\xi(t)|\le c|x|$ and $|\nu(t)|\le c|x|$. This
implies the assertion of the lemma, since it is easy to see that $\xi(t)\in N^\delta_{n-t}$
with $\delta\ge c|x|$
holds for all $t\in\intcc{0;n}$.
The trajectory at time $n$ is given by 
$\xi(n)=A^nx +\mathcal{C} V$, where $\mathcal{C}$ is the 
controllability matrix
$[B,\,AB\ldots A^{n-1}B]$ and $V$ is a vector in $\R^{mn}$ with
$V=[\nu(n-1)^\top,\; \ldots,\nu(0)^\top]^\top$. Let $\mathcal{C}'\in \R^{n\times
n}$ denote a matrix containing $n$ linearly independent columns of
$\mathcal{C}$. Such a matrix always exists, since $(A,B)$ is controllable and hence
$\mathcal{C}$ hast full rank. Given $x\in\R^n$, we determine the input sequence
$V$ by setting the entries $V'$ of $V$ associated with $\mathcal{C}'$ to
$V'=-(\mathcal{C}')^{-1}A^nx$ and the remaining entries of $V$ to zero. It
follows that $\xi(n)=A^nx +\mathcal{C} V=0$. Moreover, $|V'|\le
c'|x|$ with $c'=|(\mathcal{C}')^{-1}A^n|$ holds and $|\nu(t)|\le c'|x|$ for all
$t\in\intco{0;n}$ follows. From
$\xi(t)=A^t+\sum_{s=0}^{t-1}A^{t-(s+1)}B\nu(s)$ follows that $|\xi(t)|\le
(|A^t|+\sum_{s=0}^{t-1}|A^{t-(s+1)}B|c')|x|$ holds and the assertion follows.
\end{proof}
\begin{corollary}
Let $z_j\in\R^n$, $j\in\intcc{1;2^n}$ denote the vertices of the unit cube $\B$.
A constant $c\in\R_{>0}$ that satisfies~\eqref{e:delta} is given by
$c=\max_{j\in\intcc{1;2^n}} c_j$
where $c_j$ is obtained by solving the linear program 
\begin{IEEEeqnarray}{c}\label{e:linprog}
\begin{IEEEeqnarraybox}[][c]{t'l'l}
&&\min_{c_j,u_0,\ldots,u_{n-1}} c_j\\
s.t.
&& A^nz_j+\sum_{k=0}^{n-1} A^{n-k-1}Bu_k= 0\\
& \forall_{i\in\intcc{0;n-1}}&|u_i|\le c_j\\
& \textstyle
  \forall_{i\in\intcc{1;n-1}}&\left|A^iz_j+\sum_{k=0}^{i-1} A^{i-k-1}Bu_k\right|\le c_j.
\end{IEEEeqnarraybox}
\end{IEEEeqnarray}
\end{corollary}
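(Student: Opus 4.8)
The plan is to read off from the linear program~\eqref{e:linprog} exactly what it computes at each vertex, and then lift the resulting gain from the vertices to all of $\B$ by convexity and to all of $\R^n$ by homogeneity. Writing $\xi_{z_j}(i)=A^iz_j+\sum_{k=0}^{i-1}A^{i-k-1}Bu_k$ for the state reached from $z_j$ under inputs $u_0,\dots,u_{n-1}$, the equality constraint of~\eqref{e:linprog} is $\xi_{z_j}(n)=0$, the bounds $|u_i|\le c_j$ confine every input to $c_j\B$, and the bounds on $|\xi_{z_j}(i)|$ confine every intermediate state to $c_j\B$. Hence $c_j$ is the smallest number for which $z_j$ can be driven to the origin in $n$ steps with all inputs and intermediate states inside $c_j\B$; comparing this with the recursion~\eqref{e:nullcontrollable} this is precisely the statement $z_j\in N^\delta_n$ for $\delta=c_j$ (and hence for every $\delta\ge c_j$). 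Each $c_j$ is finite because controllability makes~\eqref{e:linprog} feasible, as the explicit input construction in the proof of Lemma~\ref{l:bounds} already shows.

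Next I would pass from the vertices to the whole ball. Since $\B$ is the convex hull of the $z_j$ and the dynamics are linear, any $x\in\B$ can be written as $x=\sum_j\lambda_j z_j$ with $\lambda_j\ge0$ and $\sum_j\lambda_j=1$, and the convex combination $u_k=\sum_j\lambda_j u_k^{(j)}$ of the optimal vertex inputs steers $x$ to the origin in $n$ steps, because $\xi_x(i)=\sum_j\lambda_j\xi_{z_j}(i)$ by linearity; in particular $\xi_x(n)=\sum_j\lambda_j\cdot0=0$. The triangle inequality together with $\sum_j\lambda_j=1$ then yields $|u_k|\le\sum_j\lambda_j|u_k^{(j)}|\le\max_j c_j=c$ and likewise $|\xi_x(i)|\le c$ for $i\in\intcc{1;n-1}$. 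Thus every $x\in\B$ can be driven to $0$ in $n$ steps with all inputs and all intermediate states inside $c\B$.

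Finally I would invoke positive homogeneity of the linear recursion. Given $\varepsilon\in\R_{>0}$ and $x\in\varepsilon\B$, the point $x/\varepsilon$ lies in $\B$, so scaling its vertex-combination control by $\varepsilon$ produces a trajectory from $x$ to $0$ in $n$ steps whose inputs and intermediate states lie in $\varepsilon c\B=\delta\B$ with $\delta=c\varepsilon$. Reading this against~\eqref{e:nullcontrollable} gives $x\in N^\delta_n$, so $\varepsilon\B\subseteq N^\delta_n$, which is the assertion~\eqref{e:delta}.

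The step I expect to require the most care is matching the constraint list of~\eqref{e:linprog} against the recursion~\eqref{e:nullcontrollable}, specifically the membership of the \emph{initial} state in $\delta\B$. Since~\eqref{e:nullcontrollable} intersects with $\delta\B$ at every stage, one also needs $\xi(0)=x\in\delta\B$; for $x\in\varepsilon\B$ this requires $\delta\ge\varepsilon$, i.e.\ $c\ge1$, which at a vertex is the condition $|z_j|=1\le c_j$. As~\eqref{e:linprog} only bounds the states $\xi_{z_j}(i)$ for $i\in\intcc{1;n-1}$, I would either also impose $i=0$ (giving $1\le c_j$ automatically) or take $c=\max\{1,\max_j c_j\}$, so that $\delta\B\supseteq\varepsilon\B$ holds at the initial time as well. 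With this bookkeeping in place, the remainder is the routine combination of convexity and homogeneity described above.
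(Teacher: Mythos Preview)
Your approach is exactly what the paper has in mind: its entire justification is the single remark that ``the corollary follows by the linearity of the trajectories of~\eqref{e:sys},'' and your proposal spells out precisely that linearity argument, namely convexity to pass from the vertices $z_j$ to all of $\B$, and positive homogeneity to pass from $\B$ to $\varepsilon\B$.

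Your final paragraph is worth keeping. The recursion~\eqref{e:nullcontrollable} intersects with $\delta\B$ at every step, so $x\in N^\delta_n$ indeed requires $|x|\le\delta$; hence $\varepsilon\B\subseteq N^\delta_n$ forces $c\ge1$. The linear program~\eqref{e:linprog} as written bounds only $\xi_{z_j}(i)$ for $i\in\intcc{1;n-1}$ and does not enforce $|z_j|=1\le c_j$, so in principle some $c_j$ could come out below $1$. The paper does not comment on this. Your proposed remedy---either add the $i=0$ constraint to the LP or replace $c$ by $\max\{1,\max_j c_j\}$---is the right bookkeeping and costs nothing, since the proof of Lemma~\ref{l:bounds} already yields a constant with $c\ge1$ (from $|\xi(0)|=|x|\le c|x|$).
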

Note that $|x|$ denotes  the infinite norm of $x\in\R^n$ and the corollary
follows by the linearity of the trajectories of~\eqref{e:sys}. 

We proceed with the main result related to the outer invariant approximation. 
\begin{theorem}\label{t:outer}
Consider the system~\eqref{e:sys} and 
compact constraint sets~\eqref{e:constraints}. Let $(A,B)$ be controllable and consider
the sequences of sets
$(R_i)_{i\in\Z_{\ge0}}$ and $(N^\delta_i)_{i\in\Z_{\ge0}}$ given according
to~\eqref{e:iteration}, respectively~\eqref{e:nullcontrollable}, with 
$\varepsilon\in \R_{>0}$, $\delta=c\varepsilon$ and $c$
satisfying~\eqref{e:delta}. Let $i^*\in\Z_{\ge0}$ be the smallest index, so
that~\eqref{e:stop1} holds. The set
\begin{IEEEeqnarray}{c}\label{e:oapprox}
  \textstyle
  R:=\bigcup_{j\in\intcc{1;n}} R_{i^*+j}+N_j^\delta
\end{IEEEeqnarray}
is a subset of $X+\delta\B$ and 
is robust controlled invariant w.r.t.~\eqref{e:sys} and $U+\delta\B$.
\end{theorem}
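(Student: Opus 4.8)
The plan is to verify the two assertions separately: the containment $R\subseteq X+\delta\B$ is immediate, whereas the invariance is reduced to the one-step property that for every $x\in R$ there is an admissible input $u\in U+\delta\B$ with $Ax+Bu+W\subseteq R$. This one-step property defines a feedback $\mu(x)=\{u\in U+\delta\B\mid Ax+Bu+W\subseteq R\}$ that keeps every trajectory starting in $R$ inside $R$, which is precisely robust controlled invariance w.r.t.~\eqref{e:sys} and $U+\delta\B$.

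For the first assertion, \eqref{e:iteration} gives $R_i\subseteq X$ for all $i\ge 1$ and~\eqref{e:nullcontrollable} gives $N_j^\delta\subseteq\delta\B$ for all $j\ge 1$. Hence each summand in~\eqref{e:oapprox} satisfies $R_{i^*+j}+N_j^\delta\subseteq X+\delta\B$, and taking the union over $j\in\intcc{1;n}$ yields $R\subseteq X+\delta\B$. This step is routine.

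For the invariance, fix $x\in R$ and choose $j\in\intcc{1;n}$ together with a decomposition $x=r+\eta$, where $r\in R_{i^*+j}$ and $\eta\in N_j^\delta$. The key idea is to split the control into a part steering the $R_{i^*+j}$-component and a part steering the $N_j^\delta$-component. Since $r\in R_{i^*+j}=\pre(R_{i^*+j-1})\cap X$, there is $w\in U$ with $Ar+Bw+W\subseteq R_{i^*+j-1}$, and since $\eta\in N_j^\delta$, there is $v\in\delta\B$ with $A\eta+Bv\in N_{j-1}^\delta$. Putting $u=w+v\in U+\delta\B$ and using linearity, the disturbance is absorbed entirely by the first component:
\[
Ax+Bu+W=(Ar+Bw+W)+(A\eta+Bv)\subseteq R_{i^*+j-1}+N_{j-1}^\delta.
\]
For $j\ge 2$ the right-hand side is one of the summands of $R$ in~\eqref{e:oapprox}, so $Ax+Bu+W\subseteq R$, as required.

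The delicate case is $j=1$, where the computation lands in $R_{i^*}+N_0^\delta=R_{i^*}$ because $N_0^\delta=\{0\}$ forces $A\eta+Bv=0$. This is exactly where the stopping criterion and Lemma~\ref{l:bounds} close the loop: by~\eqref{e:stop1} we have $R_{i^*}\subseteq R_{i^*+n}+\varepsilon\B$, and by~\eqref{e:delta} with $\delta=c\varepsilon$ we have $\varepsilon\B\subseteq N_n^\delta$, whence $R_{i^*}\subseteq R_{i^*+n}+N_n^\delta\subseteq R$ (the summand $j=n$). Thus $Ax+Bu+W\subseteq R$ in every case, establishing invariance. I expect this wrap-around at $j=1$ to be the main obstacle and the heart of the construction: it explains why the sets $N_j^\delta$ are built for the \emph{unperturbed} system with cube constraints $\delta\B$, since their role is to deterministically steer the residual $\varepsilon$-ball of slack back toward the origin over exactly $n$ steps, while the disturbance $W$ is handled entirely by the $R$-iterates.
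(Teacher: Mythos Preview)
Your argument is essentially the paper's own proof: the same splitting $x=r+\eta$ with $r\in R_{i^*+j}$, $\eta\in N_j^\delta$, the same choice $u=w+v\in U+\delta\B$, the same case distinction $j\ge 2$ versus $j=1$, and the same use of~\eqref{e:stop1} combined with $\varepsilon\B\subseteq N_n^\delta$ to close the loop at $j=1$. The only cosmetic differences are that the paper briefly disposes of the trivial case $R=\emptyset$ and cites \cite[Prop.~1,~ii)]{RakovicKerriganMayneLygeros06} for the equivalence between the one-step property and robust controlled invariance, which you spell out via the feedback $\mu$.
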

\begin{proof}
  Consider the set $R$ defined in~\eqref{e:oapprox}. If
  $R=\emptyset$ the assertion trivially holds (since the empty set is robust
  controlled invariant) and subsequently we consider $R\neq\emptyset$.  Due to the choice of
$\delta=c\varepsilon$ with $c$ satisfying~\eqref{e:delta} we have
$\varepsilon\B\subseteq \cup_{j\in\intcc{1;n}} N^\delta_j \subseteq \delta \B$, which
  together with $R_{i^*}\subseteq X$ implies that $R\subseteq X+\delta \B$. Moreover,
\eqref{e:stop1} and~\eqref{e:delta} imply $R_{i^*}\subseteq R$. We show that for
every $x\in R$ there exists $u\in U+\delta\B$ so that $Ax+Bu+W\subseteq R$ which
shows that $R$ is robust controlled invariant~\cite[Prop.~1, ii)]{RakovicKerriganMayneLygeros06}. Let $x\in R$, then there
exists $j\in\intcc{1;n}$ so that $x\in R_{i^*+j}+N_j^\delta$. Let $x=x_r+x_n$ so
that $x_r\in R_{i^*+j}$ and $x_n\in N_j^\delta$. Then there exists $u_r\in U$
and $u_n\in \delta\B$ so that $Ax_r+Bu_r+W\subseteq R_{i^*+j-1}$ and
$Ax_n+Bu_n\in N_{j-1}^\delta$ and it follows that $Ax+Bu+W\subseteq
R_{i^*+j-1}+N_{j-1}$ where $u=u_r+u_n\in U+\delta\B$. If $j\ge 2$, it follows from the definition of
$R$ that $Ax+Bu+W\subseteq R$. If $j=1$, we use~\eqref{e:stop1}
and~\eqref{e:delta} to get \mbox{$Ax+Bu+W\subseteq R_{i^*}\subseteq
R_{i^*+n}+\varepsilon\B\subseteq R$}.
\end{proof}

By decreasing
the stopping parameter $\varepsilon\in\R_{>0}$ the set $R$ defined
in~\eqref{e:oapprox} converges to $R(X)$ w.r.t.~the Hausdorff distance
$d_H(P,Q):=\inf\{\eta\in\R_{\ge0}\mid Q\subseteq P+\eta\B\wedge P\subseteq Q+\eta\B\}$.

\begin{corollary}
Consider the hypothesis of Theorem~\ref{t:outer}. Let  $R_\varepsilon$ denote the
set $R$ defined in~\eqref{e:oapprox} for parameter $\varepsilon\in\R_{>0}$ and
let $R(X)$ be the maximal robust controlled invariant set of~\eqref{e:sys} and $U$.
For any sequence $(\varepsilon_j)_{j\ge0}$ in $\R_{>0}$ with limit $0$ we
  either have $R_{\varepsilon_j}=\emptyset$ for some $j$ so that
  $R(X)=\emptyset$ follows, or we have $\lim_{\varepsilon \to 0, \varepsilon>0}d_H(R(X),R_{\varepsilon})=0$.
\end{corollary}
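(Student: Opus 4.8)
The plan is to establish the convergence of $R_\varepsilon$ to $R(X)$ by controlling the Hausdorff distance from both sides. Recall that $d_H(R(X),R_\varepsilon)\le\eta$ requires two inclusions: $R(X)\subseteq R_\varepsilon+\eta\B$ and $R_\varepsilon\subseteq R(X)+\eta\B$. The first inclusion is the easy direction: from the proof of Theorem~\ref{t:outer} we already have $R_{i^*}\subseteq R_\varepsilon$, and since $R(X)\subseteq R_i$ for every $i$ (as $R(X)$ is the maximal invariant set contained in every iterate), we obtain $R(X)\subseteq R_{i^*}\subseteq R_\varepsilon$, so this inclusion holds with $\eta=0$ independently of $\varepsilon$. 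Thus the entire burden of the argument falls on the reverse inclusion $R_\varepsilon\subseteq R(X)+\eta\B$ with $\eta\to 0$.

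For the reverse inclusion, first I would recall from Theorem~\ref{t:outer} that $R_\varepsilon\subseteq X+\delta\B$ with $\delta=c\varepsilon$, which already bounds how far $R_\varepsilon$ can stray from $X$, but not yet from $R(X)$. The cleaner route is to bound $R_\varepsilon$ by an iterate $R_i$ plus a small ball and then invoke the convergence~\eqref{e:conv}. Concretely, from~\eqref{e:oapprox} we have $R_\varepsilon=\bigcup_{j\in\intcc{1;n}}(R_{i^*+j}+N_j^\delta)$; using $N_j^\delta\subseteq\delta\B$ for each $j$ together with the monotonicity $R_{i^*+j}\subseteq R_{i^*+1}$ for $j\ge 1$, I would deduce $R_\varepsilon\subseteq R_{i^*+1}+\delta\B$. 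Since $R_{i^*+1}\subseteq R(X)+d_H(R(X),R_{i^*+1})\B$ and the iterates converge to $R(X)$ by~\eqref{e:conv}, writing $\sigma_\varepsilon:=d_H(R(X),R_{i^*+1})$ gives $R_\varepsilon\subseteq R(X)+(\sigma_\varepsilon+\delta)\B$. The remaining work is to show $\sigma_\varepsilon+\delta\to 0$ as $\varepsilon\to 0$.

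The term $\delta=c\varepsilon$ clearly vanishes, so the main obstacle is controlling $\sigma_\varepsilon=d_H(R(X),R_{i^*+1})$, i.e.\ showing that the index $i^*=i^*(\varepsilon)$ at which the stopping criterion~\eqref{e:stop1} first fires is forced to grow (so the iterate lies arbitrarily close to the limit) as $\varepsilon\to 0$. This is the delicate point: a priori the stopping rule could trigger at a small index while $\varepsilon$ is tiny if the iterates happen to be close over an $n$-step window without being close to $R(X)$. I would argue by contradiction using Lemma~\ref{l:termination} and the monotone convergence~\eqref{e:conv}: if along some sequence $\varepsilon_j\to 0$ the indices $i^*(\varepsilon_j)$ stayed bounded by some $N$, then passing to a subsequence with a fixed index $i^*=\bar\imath\le N$, the criterion $R_{\bar\imath}\subseteq R_{\bar\imath+n}+\varepsilon_j\B$ holds for all $j$, and letting $\varepsilon_j\to 0$ yields $R_{\bar\imath}\subseteq R_{\bar\imath+n}$; combined with the reverse inclusion $R_{\bar\imath+n}\subseteq R_{\bar\imath}$ (monotonicity of the iteration) this forces $R_{\bar\imath}=R_{\bar\imath+n}=R(X)$, whence $\sigma_{\varepsilon_j}=0$ and the conclusion holds trivially. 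In the complementary case $i^*(\varepsilon_j)\to\infty$, convergence~\eqref{e:conv} directly gives $\sigma_{\varepsilon_j}=d_H(R(X),R_{i^*(\varepsilon_j)+1})\to 0$. In either case $\sigma_{\varepsilon_j}+\delta_j\to 0$, establishing $d_H(R(X),R_{\varepsilon_j})\to 0$ and completing the proof; the empty-set alternative is handled exactly as in Lemma~\ref{l:termination}, since $R_\varepsilon=\emptyset$ forces $R_{i^*}=\emptyset$ and hence $R(X)=\emptyset$.
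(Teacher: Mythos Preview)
Your proof is correct and follows essentially the same route as the paper's: sandwich $R_\varepsilon$ between $R(X)$ and an iterate plus a $\delta$-ball, then split into cases according to whether $i^*(\varepsilon_j)$ stays bounded or diverges, using $R_{\bar\imath}=R_{\bar\imath+n}$ in the bounded case to conclude finite determination. One small point worth tightening: your dichotomy ``bounded by some $N$'' versus ``$i^*(\varepsilon_j)\to\infty$'' is not literally exhaustive, since an integer sequence can be unbounded without tending to infinity. The paper closes this gap by first observing that $\varepsilon'<\varepsilon$ implies $i^*(\varepsilon')\ge i^*(\varepsilon)$ (a tighter tolerance cannot make~\eqref{e:stop1} fire earlier), so that for $\varepsilon_j\downarrow 0$ the indices $i^*(\varepsilon_j)$ form a nondecreasing integer sequence and hence either tend to infinity or are eventually constant; alternatively, a sub-subsequence argument on your side would achieve the same end.
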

\begin{proof}
Consider the sequence $(R_i)_{i\in\Z_{\ge0}}$ 
according to~\eqref{e:iteration}.
Let $i^*(\varepsilon)$ denote the
smallest $i^*\in\Z_{\ge0}$ such that~\eqref{e:stop1} holds for a fixed
$\varepsilon\in\R_{>0}$.
Consider a sequence $(\varepsilon_j)_{j\ge 0}$ in
  $\R_{>0}$ that converges to zero. If $R_{\varepsilon_j}=\emptyset$ for some
  $j\ge0$, it follows from~\eqref{e:oapprox} that
  $R_{i^*(\varepsilon_j)+j'}=\emptyset$ for all $j'\in\intcc{1;n}$, and
  $R(X)=\emptyset$ follows. Subsequently we consider
  $R_{\varepsilon_j}\neq\emptyset$ for all $j\in\Z_{\ge0}$.
  From $\delta_j = c \varepsilon_j$ and
$\varepsilon\B\subseteq \cup_{j\in\intcc{1;n}} N^\delta_j \subseteq \delta \B$
follows $R_{i^*(\varepsilon_j)} \subseteq R_{\varepsilon_j} \subseteq
R_{i^*(\varepsilon_j)}+c\varepsilon_j\B$ and it is sufficient to show that
$R_{i^*(\varepsilon_j)}$ converges to $R(X)$.
We use the fact that $\varepsilon_{j'}<\varepsilon_{j}$
implies $i^*(\varepsilon_{j'})\ge i^*(\varepsilon_{j})$ and distinguish two
cases: 1) if $i^*(\varepsilon_j)\to \infty$ as $j\to \infty$ we
  use~\eqref{e:conv}
  to conclude $\lim_{j\to \infty} d_H(R_{i^*(\varepsilon_j)},R(X))=0$; 2) otherwise we can assume that
$i^*(\varepsilon_j)\to i'$ for some $i'\in \Z_{\ge0}$. Hence, there exists $j'\in\Z_{\ge0}$ such that
$i^*(\varepsilon_j)=i$ for all $j\ge j'$ and by~\eqref{e:stop1} we have $R_i\subseteq  R_{i+n}+ \varepsilon_j\B$ for all $j\ge j'$,
which implies $R_i=R_{i+n}$ and we get $R_i=R(X)$.
\end{proof}

\begin{remark}
Consider the system~\eqref{e:sys} and  the
compact constraint sets~\eqref{e:constraints}. Let $(A,B)$ be controllable and
fix $\varepsilon\in\R_{>0}$. 
Suppose that we have an algorithm to iteratively compute $R_i$ and check the
inclusion~\eqref{e:stop1}, as it is the case e.g. for sets given
by~\eqref{e:poly:constraints} see~\cite{RakovicKerriganMayneLygeros06,Baotic09}. Then It follows from Lemma~\ref{l:termination} that there
exists $i\in\Z_{\ge0}$ so that~\eqref{e:stop1} holds. If $R_{i+n}=\emptyset$, then there does not
exist a feedback to enforce the constraints $X$ and $U$, in particular
$R(X)=\emptyset$. If $R_{i+n}\neq\emptyset$, due to the controllability of
$(A,B)$ we can solve the linear
program~\eqref{e:linprog} and compute the sets
$(N^\delta_{i+j})_{j\in\intcc{1;n}}$ with which we construct the set $R$ according
to~\eqref{e:oapprox}. Then it follows from Theorem~\ref{t:outer} that $R$ is
robust controlled invariant and a feedback to enforce the constraints $X+\delta\B$ and
$U+\delta\B$ is derived from the map
\begin{IEEEeqnarray*}{c}
K(x)=\{u\in U+\delta\B \mid Ax + Bu +W\subseteq R\}.
\end{IEEEeqnarray*}
Since $R(X)\subseteq R$ it is straightforward to see that the map defined
in~\eqref{e:controller} satisfies $C(x)\subseteq K(x)$ for all $x\in R$.
\end{remark}

For polyhedral disturbances  and constraints
sets~\eqref{e:poly:constraints}, the set iterates $R_i$ can be effectively
computed and the inclusion can be effectively tested,
see~\cite[Sec.~III.B]{RakovicKerriganMayneLygeros06} and~\cite{Baotic09}. In
the worst case, the computational complexity of these operations grows
exponentially with $i$, see~\cite{Baotic09}. Nevertheless, we present in Section V a nontrivial
example where the proposed algorithm can be executed until termination.

\section{Inner Invariant Approximation}
\label{s:ia}

For the inner approximation of $R(X)$ we fix $\rho\in\R_{> 0}$ and analyze the sequence 
\begin{IEEEeqnarray}{c}\label{e:inner}
\begin{IEEEeqnarraybox}[][c]{c'c}
  R^\rho_0=X,&
  R^\rho_{i+1}=\pre_\rho(R^\rho_i)\cap X
\end{IEEEeqnarraybox}
\end{IEEEeqnarray}
where $\pre_\rho$ is defined in~\eqref{e:prerho}. The stopping criterion, as
proposed in~(5.17) in~\cite{BM08}, is given by 
\begin{IEEEeqnarray}{c}\label{e:stop2}
  R^\rho_i\subseteq R_{i+1}^\rho+\rho\B.
\end{IEEEeqnarray}

\begin{theorem}
Consider the system~\eqref{e:sys} and compact
constraint sets~\eqref{e:constraints}. Let $(R^\rho_i)_{i\in\Z_{\ge0}}$ be defined in~\eqref{e:inner}.
For every $\rho\in\R_{>0}$ there exists an index $i\in\Z_{\ge0}$ such
that~\eqref{e:stop2} holds and $R^\rho_{i+1}$ is robust controlled
invariant w.r.t.~\eqref{e:sys} and $U$.
\end{theorem}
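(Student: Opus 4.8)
The plan is to prove the two claims separately: first the existence of a terminating index satisfying~\eqref{e:stop2}, and then the robust controlled invariance of $R^\rho_{i+1}$ at that index.

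For the \textbf{termination} part, I would mirror the argument of Lemma~\ref{l:termination}. The key observation is that $\pre_\rho$ is a contraction of $\pre$ in the sense that $\pre_\rho(R)\subseteq\pre(R)$ for every $R$, so by induction $R^\rho_i\subseteq R_i$ where $(R_i)$ is the sequence from~\eqref{e:iteration}. More importantly, I expect $(R^\rho_i)_{i\in\Z_{\ge0}}$ to be a nested \emph{decreasing} sequence: since $R^\rho_1=\pre_\rho(X)\cap X\subseteq X=R^\rho_0$, and $\pre_\rho$ is monotone with respect to set inclusion (if $P\subseteq Q$ then $\pre_\rho(P)\subseteq\pre_\rho(Q)$), a straightforward induction gives $R^\rho_{i+1}\subseteq R^\rho_i$ for all $i$. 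A decreasing sequence of compact sets converges in the Hausdorff distance to its intersection $R^\rho_\infty:=\bigcap_i R^\rho_i$. Hence for the given $\rho>0$ there exists $i$ with $d_H(R^\rho_i,R^\rho_{i+1})\le\rho$, and since the sequence is decreasing this $d_H$ bound is exactly the one-sided inclusion $R^\rho_i\subseteq R^\rho_{i+1}+\rho\B$ of~\eqref{e:stop2} (the reverse inclusion $R^\rho_{i+1}\subseteq R^\rho_i$ holds outright). I should double-check that the relevant sets are nonempty or else handle the empty case trivially as in Lemma~\ref{l:termination}, since $\emptyset\subseteq\emptyset+\rho\B$ holds vacuously once some iterate becomes empty.

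For the \textbf{invariance} part, suppose~\eqref{e:stop2} holds at index $i$, so $R^\rho_i\subseteq R^\rho_{i+1}+\rho\B$. I want to show $R^\rho_{i+1}$ is robust controlled invariant, which by~\cite[Prop.~1, ii)]{RakovicKerriganMayneLygeros06} amounts to proving that for every $x\in R^\rho_{i+1}$ there exists $u\in U$ with $Ax+Bu+W\subseteq R^\rho_{i+1}$. Take $x\in R^\rho_{i+1}=\pre_\rho(R^\rho_i)\cap X$. By definition of $\pre_\rho$ there exists $u\in U$ with $Ax+Bu+W+\rho\B\subseteq R^\rho_i$. Now I invoke the stopping criterion: since $R^\rho_i\subseteq R^\rho_{i+1}+\rho\B$, I have $Ax+Bu+W+\rho\B\subseteq R^\rho_{i+1}+\rho\B$. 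The crucial step is to cancel the $\rho\B$ term, i.e., to conclude $Ax+Bu+W\subseteq R^\rho_{i+1}$ from $Ax+Bu+W+\rho\B\subseteq R^\rho_{i+1}+\rho\B$.

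I expect this cancellation to be the \textbf{main obstacle}, because Minkowski addition is not cancellative for arbitrary sets. The clean way to handle it is the following set-theoretic fact: for any set $S$ and $\rho>0$, one has $S+\rho\B=\{y\mid y+\rho\B\subseteq S+\rho\B\}$-type erosion/dilation duality, or more directly, $(S+\rho\B)\ominus\rho\B=S$ whenever $S$ is closed, where $\ominus$ denotes Minkowski erosion. Concretely, from $P+\rho\B\subseteq Q+\rho\B$ with $Q$ closed, one deduces $P\subseteq Q$ by noting that any $p\in P$ satisfies $p+\rho\B\subseteq Q+\rho\B$, hence $p$ lies in the erosion $(Q+\rho\B)\ominus\rho\B=Q$. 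I would apply this with $P=Ax+Bu+W$ and $Q=R^\rho_{i+1}$, using that $R^\rho_{i+1}$ is compact (hence closed) since it is an intersection involving the compact set $X$. This yields $Ax+Bu+W\subseteq R^\rho_{i+1}$ with $u\in U$, completing the proof. As with the earlier theorem, the empty case is trivial since the empty set is robust controlled invariant.
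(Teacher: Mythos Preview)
Your plan coincides with the paper's proof: termination is argued via Hausdorff convergence of the decreasing sequence of compact sets $(R^\rho_i)$ (the paper simply refers back to the argument of Lemma~\ref{l:termination}), and invariance is obtained by combining $Ax+Bu+W+\rho\B\subseteq R^\rho_i$ with the stopping criterion $R^\rho_i\subseteq R^\rho_{i+1}+\rho\B$ and then ``cancelling'' the $\rho\B$ to conclude $Ax+Bu+W\subseteq R^\rho_{i+1}$. The paper asserts this implication in one line without further comment, so structurally you have reproduced its argument.

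You were right to isolate the cancellation as the delicate step, but the justification you propose does not work. The identity $(Q+\rho\B)\ominus\rho\B=Q$ is \emph{not} valid for arbitrary closed $Q$; it is exactly the statement that $Q$ equals its own morphological closing by $\rho\B$, which fails as soon as $Q$ has concavities or gaps at scale $\rho$. A one-dimensional counterexample: take $Q=\{0,2\}$ and $\rho=1.5$; then $Q+\rho\B=[-1.5,3.5]$ and $(Q+\rho\B)\ominus\rho\B=[0,2]\supsetneq Q$, so $P+\rho\B\subseteq Q+\rho\B$ with $P=\{1\}$ does not give $P\subseteq Q$. Since the paper explicitly allows $X$---and hence each $R^\rho_i$---to be a finite union of polytopes, $R^\rho_{i+1}$ need not be convex, and your erosion--dilation argument does not apply. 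In short, your proposal matches the paper's line of reasoning, but the extra justification you supply for the cancellation step is incorrect; the paper itself leaves this implication unjustified.
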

\begin{proof}
The proof of the existence of $i\in\Z_{\ge0}$ so that~\eqref{e:stop2} holds, follows by
the same arguments as the proof of Lemma~\ref{l:termination} and is omitted here. 

If $R^\rho_{i+1}=\emptyset$ the assertion trivially holds and subsequently we
consider $R^\rho_{i+1}\neq\emptyset$.
Let $x\in R^\rho_{i+1}=\pre_\rho(R^\rho_i)\cap X$. There exists $u\in U$
such that $Ax+Bu+W+\rho\B\subseteq R^\rho_i\subseteq R^\rho_{i+1}+\rho\B$ which
implies that $Ax+Bu+W\subseteq  R^\rho_{i+1}$ and it follows that $R^\rho_{i+1}$
  is robust controlled invariant~\cite[Prop.~1, ii)]{RakovicKerriganMayneLygeros06}.
\end{proof}

Let $\varepsilon\in\R_{>0}$, in the following theorem we consider the strengthened constraint sets 
\begin{IEEEeqnarray}{c}\label{e:stronger}
\begin{IEEEeqnarraybox}[][c]{l}
\bar X_\varepsilon=\{x\in \R^n\mid x+\varepsilon\B\subseteq X\}\\
\bar U_\varepsilon=\{u\in \R^m\mid u+\varepsilon\B\subseteq U\}
\end{IEEEeqnarraybox}
\end{IEEEeqnarray}
and show that there exists a parameter $\rho\in\R_{>0}$
so that any robust controlled invariant set $\bar R_\varepsilon\subseteq \bar X_\varepsilon$
w.r.t.~\eqref{e:sys} and $\bar U_\varepsilon$ is a subset of $R_{i+1}^\rho$.

\begin{theorem}
Consider the system~\eqref{e:sys}, $(A,B)$ being
controllable and compact
constraint sets~\eqref{e:constraints}. 
Let $(R^\rho_i)_{i\in\Z_{\ge0}}$ be defined in~\eqref{e:inner}.
Let $\varepsilon\in\R_{>0}$, and consider the sets $\bar X_\varepsilon$ and
$\bar U_\varepsilon$ in~\eqref{e:stronger}. There exists $\rho\in\R_{>0}$ so that
for any set $\bar R_\varepsilon\subseteq \bar X_\varepsilon$ that satisfies
\begin{IEEEeqnarray}{c}\label{e:inner:inv}
  x\in\bar R_\varepsilon \implies \exists_{u \in\bar U_\varepsilon}: Ax+Bu+W\subseteq \bar X_\varepsilon
\end{IEEEeqnarray}
we have $\bar R_\varepsilon\subseteq R_{i+1}^\rho$, where $i\in\Z_{\ge0}$ satisfies~\eqref{e:stop2}.
\end{theorem}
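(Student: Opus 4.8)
The plan is to prove the stronger statement that $\bar R_\varepsilon\subseteq R^\rho_j$ for \emph{every} $j\in\Z_{\ge0}$, from which the assertion follows immediately: the sequence $(R^\rho_j)_{j\in\Z_{\ge0}}$ is nonincreasing (by monotonicity of $\pre_\rho$ together with $R^\rho_1\subseteq R^\rho_0=X$), so in particular $\bar R_\varepsilon\subseteq R^\rho_{i+1}$ for the terminal index $i$ satisfying~\eqref{e:stop2}. The key observation is a game characterization of the iterates: unwinding~\eqref{e:inner} and~\eqref{e:prerho} shows that $x\in R^\rho_j$ holds if and only if there is a causal input strategy with values in $U$ that keeps the state in $X$ for $j$ steps against every realization of the inflated disturbance $W+\rho\B$. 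Hence it suffices to exhibit, for each $x\in\bar R_\varepsilon$, one such strategy that works for \emph{all} time.

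The central idea is a nominal-plus-correction decomposition. Fixing $x\in\bar R_\varepsilon$, I would run a \emph{nominal} trajectory $\xi^{\mathrm n}$ inside $\bar R_\varepsilon$: using that $\bar R_\varepsilon$ is robust controlled invariant w.r.t.\ $\bar U_\varepsilon$ (cf.~\eqref{e:inner:inv}), at each $t$ pick $u^{\mathrm n}(t)\in\bar U_\varepsilon$ with $A\xi^{\mathrm n}(t)+Bu^{\mathrm n}(t)+W\subseteq\bar R_\varepsilon$ and set $\xi^{\mathrm n}(t+1)=A\xi^{\mathrm n}(t)+Bu^{\mathrm n}(t)+w(t)$ for the realized $w(t)\in W$, so that $\xi^{\mathrm n}(t)\in\bar R_\varepsilon$ for all $t$. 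Applying the input $u(t)=u^{\mathrm n}(t)+v(t)$ to the $\rho$-inflated system, the error $e(t):=\tilde\xi(t)-\xi^{\mathrm n}(t)$ between the actual state $\tilde\xi$ and the nominal state obeys the clean, disturbance-driven recursion
\[
  e(t+1)=Ae(t)+Bv(t)+d(t),\qquad e(0)=0,
\]
where $d(t)\in\rho\B$ is the extra inflation. The problem thus reduces to choosing corrections $v(t)$ that keep $|e(t)|$ and $|v(t)|$ below $\varepsilon$: then $\xi^{\mathrm n}(t)\in\bar R_\varepsilon\subseteq\bar X_\varepsilon$ and $u^{\mathrm n}(t)\in\bar U_\varepsilon$ together with~\eqref{e:stronger} yield $\tilde\xi(t)=\xi^{\mathrm n}(t)+e(t)\in X$ and $u(t)=u^{\mathrm n}(t)+v(t)\in U$ for all $t$, which is exactly the required strategy.

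The remaining obstacle—and the only place controllability enters—is that $A$ may amplify the error, so any naive bound on $e$ diverges; this is precisely why the per-step inflation by $\rho\B$ cannot be absorbed locally. I would resolve it with a deadbeat correction: since $(A,B)$ is controllable, pole placement yields $K$ with $F:=A-BK$ nilpotent, $F^{n}=0$. Taking $v(t)=-Ke(t)$ gives $e(t)=\sum_{s=0}^{t-1}F^{\,t-1-s}d(s)$, and because $F^{k}=0$ for $k\ge n$ only the last $n$ summands survive, whence $|e(t)|\le\rho\sum_{k=0}^{n-1}|F^{k}|=:\kappa_e\rho$ and $|v(t)|\le|K|\kappa_e\rho$ uniformly in $t$. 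Setting $\kappa:=\kappa_e\max\{1,|K|\}$, which depends only on $(A,B)$, and choosing any $\rho\in\R_{>0}$ with $\rho\le\varepsilon/\kappa$ forces $|e(t)|\le\varepsilon$ and $|v(t)|\le\varepsilon$ for all $t$, completing the infinite-horizon strategy; thus $x\in R^\rho_j$ for every $j$, and therefore $\bar R_\varepsilon\subseteq R^\rho_{i+1}$. (Alternatively, one can avoid an explicit feedback and superpose the $n$-step steering maps underlying Lemma~\ref{l:bounds}, treating each $d(s)$ as an initial perturbation to be driven to zero within $n$ steps; at most $n$ corrections then overlap at any time, giving the same uniform bound of order $\kappa\rho$.) The main subtlety to get right is that this single $\rho$ must work uniformly for \emph{every} admissible $\bar R_\varepsilon$ and over the \emph{whole} infinite horizon, which is exactly what the time-invariant, $\bar R_\varepsilon$-independent constant $\kappa$ secures.
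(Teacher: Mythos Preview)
Your argument is correct and reaches the same conclusion as the paper, but by a genuinely different route. The paper works entirely at the level of sets: it fixes $\delta=\varepsilon/n$, takes $\rho$ with $c\rho=\delta$ (the constant of Lemma~\ref{l:bounds}), and shows that the Minkowski sum $K:=\bar R_\varepsilon+\sum_{j=1}^{n}N_j^\delta$ is contained in $X$ and is robust controlled invariant for the $\rho$-inflated system~\eqref{e:modified:sys}; the decomposition $x=x_r+\sum x_j$ together with the one-step transitions $N_j^\delta\to N_{j-1}^\delta$ and $\rho\B\subseteq N_n^\delta$ gives a set-to-set invariance proof, whence $\bar R_\varepsilon\subseteq K\subseteq R^\rho(X)\subseteq R^\rho_{i+1}$. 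You instead argue at the level of trajectories: a nominal-plus-correction split with deadbeat feedback $v=-Ke$, $F=A-BK$ nilpotent, bounds the error by $\kappa\rho$ uniformly in time, yielding an explicit causal strategy and hence $\bar R_\varepsilon\subseteq R^\rho_j$ for every $j$. Your parenthetical alternative---superposing $n$-step steering corrections for each $d(s)$---is precisely the trajectory counterpart of the paper's $N_j^\delta$ construction. What the paper's approach buys is a concrete invariant set and an explicit link to the constant $c$ already computed for the outer approximation; what yours buys is a self-contained argument that does not require introducing the sets $N_j^\delta$, and a feedback one can actually implement. One small caution: both your argument and the paper's proof use~\eqref{e:inner:inv} as if the right-hand side were $\bar R_\varepsilon$ rather than $\bar X_\varepsilon$, i.e., as robust controlled invariance of $\bar R_\varepsilon$; this is the intended hypothesis (see the sentence preceding the theorem), so your reading is consistent with the paper.
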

\begin{proof}
Let us consider the system 
\begin{IEEEeqnarray}{c}\label{e:modified:sys}
\xi(t+1)=A\xi(t)+B\nu(t)+W+\rho\B.
\end{IEEEeqnarray}
Let $R^\rho(X)$ be the maximal robust controlled invariant set of~\eqref{e:modified:sys} and $U$.
From the definition of $\pre_\rho$ in~\eqref{e:inner} we see that 
$R^\rho(X)\subseteq R_i^\rho$ for every $i\in\Z_{\ge0}$. 
  Let $\varepsilon\in\R_{>0}$. In the following, we consider $\bar
  R_{\varepsilon}\neq\emptyset$ (otherwise the assertion trivially holds) and 
show that there exists $\rho\in \R_{>0}$ and a set $K$ with $\bar
R_\varepsilon\subseteq K\subseteq R^\rho(X)$, which proves the theorem.

Let $\delta=\varepsilon/n$ and $\rho\in\R_{>0}$ so
that $c \rho =\delta $, where the constant $c$ is chosen according to
Lemma~\ref{l:bounds} (which is applicable, since $(A,B)$ is controllable).
Consider $N^\delta_i$, $i\in\intcc{0;n}$ defined according
to~\eqref{e:nullcontrollable}. Note that~\eqref{e:delta} implies that
$\rho\B\subseteq N^\delta_n$. We define the set
$K:=\bar R_\varepsilon+\sum_{i=1}^n N^\delta_i$. Note that
$N_i^\delta\subseteq\delta \B$ holds for every $i\in\intcc{1;n}$, which together
with $\bar R_\varepsilon+\varepsilon\B\subseteq X$ and $\delta=\varepsilon/n$,
implies  $K\subseteq X$. We show that $K$ is robust controlled invariant
w.r.t.~\eqref{e:modified:sys} and $U$. Let $x\in K$, and pick
$x_r\in\bar R_\varepsilon$ and
$x_i\in N_i^\delta$, $i\in\intcc{1;n}$ so that $x=x_r+\sum_{i=1}^n x_i$. Since
$\bar R_\varepsilon$ satisfies~\eqref{e:inner:inv}, we can
pick $u_r\in \bar U_\varepsilon$ so that
$Ax_r+Bu_r+W\subseteq \bar R_\varepsilon$, which
implies that  $Ax_r+Bu_r+W+\rho\B\subseteq \bar R_\varepsilon +N_n^\delta$.
Moreover, for $x_i\in N_i^\delta$, we pick $u_i\in \delta\B$
so that $Ax_i+Bu_i\in N_{i-1}^\delta$. Let $u=u_r+\sum_{i=1}^n u_i$. As $u_r\in
\bar U_\varepsilon$ and
$\delta\le \varepsilon/n$  we have $u\in U$. We see that
$Ax+Bu+W+\rho\B\subseteq K$ holds, which shows $K\subseteq R^\rho(X)$.
\end{proof}

\section{An illustrative example}

We proceed with a simple example taken from~\cite{Vidal00} to illustrate our
results. We consider the system~\eqref{e:sys} with parameters
\begin{IEEEeqnarray*}{c/c/c}
A=
\begin{bmatrix} 0 & 1 \\ 1 & 1 \end{bmatrix},
&
B=
\begin{bmatrix} 0 \\ 1 \end{bmatrix},
&
W=\left\{
\begin{bmatrix} 1 \\ 1 \end{bmatrix}\alpha\in\R^2\;\middle|\;
\alpha\in\intcc{-1,1}
\right\}.
\end{IEEEeqnarray*}
The constraint sets are given by 
$U=\intcc{-100,100}$ and  $X=\{ x\in \R^2\mid Hx\le h_0\}$ with
\begin{IEEEeqnarray*}{c'c}
H= 
\left[
\begin{IEEEeqnarraybox*}[][c]{,r/r,}
1 & 1 \\
-3 & 1 \\
0 & -1 %
\end{IEEEeqnarraybox*}
\right]
,&
h_0=
\left[
\begin{IEEEeqnarraybox*}[][c]{,r,}
100 \\ -50 \\ -26%
\end{IEEEeqnarraybox*}
\right].
\end{IEEEeqnarray*}
For this particular example we are able to analytically compute the set
iterations $(R_i)_{i\in \Z_{\ge0}}$ defined in~\eqref{e:iteration}.
Specifically, the sets $(R_i)_{i\in \Z_{\ge0}}$ and $W$ are polytopes and we 
follow the approach  in~\cite{Ber72} to compute $\pre(R_i)$ in
terms of the Pontryagin set difference $R_i\sim W=\{x\in R_i\mid x+W\subseteq
R_i\}$, i.e.,
\begin{IEEEeqnarray*}{c}
\pre(R_i)=\{x\in \R^2\mid \exists_{u\in U} Ax+Bu \in (R_i\sim W)\}.
\end{IEEEeqnarray*}
See also \cite[Sec.~3.3]{Kerrigan00}. For $R_0=X$, we apply~\cite[Thm.~2.4]{KG98}, and obtain the difference $R_0\sim W=\{
  x\in \R^2\mid Hx\le h_0'\}$ with 
$h_0'=
\left[
98,\; -52,\; -27%
\right]^\top$
and $\pre(R_0)$ follows simply by projecting the polytope 
\begin{IEEEeqnarray*}{c}
\left\{
  (x,u)\in \R^3\;\middle|\;
\left[
\begin{IEEEeqnarraybox*}[][c]{,r/r,}
HA & HB \\
0 & 1 \\
0 & -1 %
\end{IEEEeqnarraybox*}
\right]
\left[
\begin{IEEEeqnarraybox*}[][c]{,r,}
x \\
u %
\end{IEEEeqnarraybox*}
\right]
\le
\left[
\begin{IEEEeqnarraybox*}[][c]{,r,}
h_0' \\
100 \\
100 %
\end{IEEEeqnarraybox*}
\right]
\right\}
\end{IEEEeqnarray*}
onto its first two coordinates. After the intersection of $\pre(R_0)$ with $R_0$
we obtain $R_1=\{x\in\R^2\mid Hx\le h_1\}$ with 
\begin{IEEEeqnarray*}{c}
h_1=
\left[
100,\; -50,\; -26-\tfrac{1}{3}%
\right]^\top.
\end{IEEEeqnarray*}
We repeat this computation and obtain the sequence of sets by
$R_i=\{x\in\R^2\mid Hx\le h_i\}$ with
\begin{IEEEeqnarray*}{c}
  \textstyle
h_i=
\left[
100,\; -50,\; -25-\sum_{j=0}^i\tfrac{1}{3^i}%
 \right]^\top
\end{IEEEeqnarray*}
whose limit is given by  $R(X)=\{x\in\R^2\mid Hx\le h\}$ with
\begin{IEEEeqnarray*}{c}
h=
\left[
100,\; -50,\; -26.5%
 \right]^\top.
\end{IEEEeqnarray*}
The boundary of the maximal robust controlled invariant set
$R(X)$ is illustrated in Figure~\ref{f:outer} and~\ref{f:inner} by the dotted
line.

Note that $R(X)$ is not finitely
determined, $X$ does contain the origin in its interior, nor is $W=\{0\}$. Hence, it is not
possible to apply any of the methods in~\cite{Vidal00,TP06,GutmanCwikel84, Blanchini94}, 
 to invariantly
approximate the maximal robust controlled invariant set. In the following we
apply the results from Sections~\ref{s:oa} and~\ref{s:ia} to compute
outer and inner invariant approximations of $R(X)$.

{\bf Outer approximation.}
We start by solving the linear program~\eqref{e:linprog}
to determine the constant $c=2$ which satisfies~\eqref{e:delta}. The
$\delta$-constraint $i$-step null controllable sets $N_j^\delta$ for
$j\in\intcc{1;2}$ are illustrated in Figure~\ref{f:f2}.
\begin{figure}[h]
\adjustbox{valign=t}{
\begin{minipage}{0.45\columnwidth}
\caption{The
$\delta$-constraint $1$-step (thick black bar) and $2$-step (dark gray polytope)
null controllable sets $N_j^\delta$ containing the
ball $\tfrac{\delta}{2}\B$ (light gray box).}\label{f:f2}
\end{minipage}}%
\adjustbox{valign=t}{
\begin{minipage}{0.5\columnwidth}
\centering
\begin{tikzpicture}[>=latex]
\node at (0,0) {\includegraphics[width=4cm]{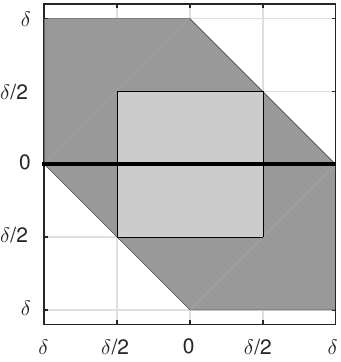}};
\node at (.8,-.4) {$\tfrac{\delta}{2}\B$};
\node at (-1.2,-.8) {$N^\delta_1$};
\draw[thick,->] (-1.2,-.6) -- (-1.2,0);
\node at (1.5,1.8) {$N^\delta_2$};
\draw[thick,->] (1.2,1.8) -- (0.4,1.8);
\end{tikzpicture}
\end{minipage}
}
\end{figure}
From the previous consideration it is straightforward to see that 
\begin{IEEEeqnarraybox}{c}
R_{i}\subseteq R_{i+2}+\tfrac{4}{3^{i+2}}\B
\end{IEEEeqnarraybox}
holds for all $i\in\Z_{\ge 0}$. Hence, in each iteration the stopping parameter is
given by $\varepsilon=4/3^{i+2}$. We illustrate the robust controlled invariant set defined
in~\eqref{e:oapprox} for $i=0$ and $i=3$ relative to $R(X)$ in
Figure~\ref{f:outer}. For $i=3$, $\delta=8/243$ and $R$ in Figure~\ref{f:outer} is
indistinguishable form $R(X)$.

\begin{figure}[h]
\centering
\includegraphics[width=8cm]{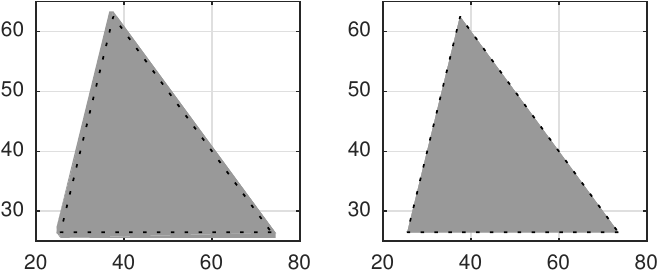}
\caption{Invariant outer approximations of $R(X)$ given according to~\eqref{e:oapprox} for
$i=0$ (left) and $i=3$ (right). The dotted line indicates $R(X)$.}\label{f:outer}
\end{figure}

{\bf Inner approximation.}
In order to obtain an inner approximation of $R(X)$, we compute the sequence of sets 
$(R_i^\rho)_{i\in\Z_{\ge0}}$ defined in~\eqref{e:inner}. Similar as before, we
compute $\pre_\rho(R^\rho_i)$ by using the Pontryagin set difference, i.e.,
\begin{IEEEeqnarray*}{c}
\pre_\rho(R^\rho_i)=\{x\in \R^2\mid \exists_{u\in U} Ax+Bu \in (R_i\sim (W+\rho\B))\}.
\end{IEEEeqnarray*}
We apply again~\cite[Thm.~2.4]{KG98} to compute $R_i\sim (W+\rho\B)$. Two
invariant inner approximations of $R(X)$ with parameters $\rho=1$ and
$\rho=1/10$ are illustrated in Figure~\ref{f:inner}. 

\begin{figure}[h]
\centering
\includegraphics[width=8cm]{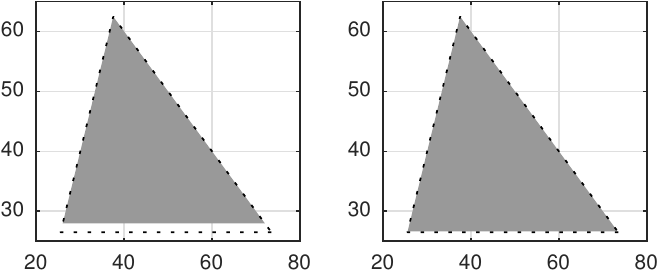}
\caption{Invariant inner approximations of $R(X)$ with parameters $\rho=1$
(left) and
$\rho=1/10$ (right). The dotted line indicates $R(X)$.}\label{f:inner}
\end{figure}

\section{Numerical Experiments}
\label{s:app}

We continue with the approximation of the maximal robust controlled invariant
set for a more complex system. To this end, we consider the linear dynamics used
in~\cite{KuwataSchouwenaarRichardsHow05} to model a rotor craft.
The system consists of four states. The first two states represent the position and the last two
states represent the velocity of the rotor craft. The
acceleration is considered as the input of the system.
The parameters of the differential inclusion are given by
\begin{IEEEeqnarray*}{l}
A=
\textstyle
\begin{bmatrix} I_2 & \tau I_2 \\ 0 & I_2 \end{bmatrix},
\;
B=
\begin{bmatrix} \tfrac{\tau^2}{2} I_2 \\ \tau I_2 \end{bmatrix},\\
\textstyle
  W=(\intcc{-\nicefrac{\tau^2}{2},\nicefrac{\tau^2}{2}}^2\times
  \intcc{-\tau,\tau}^2)w_{\rm max}
\end{IEEEeqnarray*}
where $I_2$ denotes the 2-dimensional identity matrix and $\tau=2.6\;$sec.
The state constraint set is given by
\begin{IEEEeqnarray*}{c't'c}
  X=(\intcc{-35,5}\times \intcc{-10,10}\times \intcc{-v_{\rm max},v_{\rm
  max}}^2)\smallsetminus O
\end{IEEEeqnarray*}
with $O=\cup_{i=1}^p o_i
+\intcc{-4,4}\times\intcc{-1,1}\times\intcc{-v_{\rm max},v_{\rm max}}^2$
representing some obstacles. The
first two coordinates of the centers of the obstacles are randomly generated integer values 
\begin{IEEEeqnarray*}{c}
\left[
\begin{IEEEeqnarraybox*}[][c]{r/r/r/r/r/r/r/r/r/r}
 -8&  -5&  -13&  -22&  -11&  -22&   -10& -2&  -15&  -17\\
  1&   5&  -8 &   -2 &   4 &   3 &   -1 &  7&  -6 & 5
\end{IEEEeqnarraybox*}
\right]
\end{IEEEeqnarray*}
while the last two coordinates of $o_i$ are set to zero.
The
input is constrained to $U=\intcc{-a_{\rm max},a_{\rm max}}$. 
We follow~\cite{KuwataSchouwenaarRichardsHow05} and set 
\begin{IEEEeqnarray*}{c/c/t/c}
  v_{\rm max}=0.5,&a_{\rm max}=0.17&and &w_{\rm max}\in \bar w\cdot a_{\rm max}
\end{IEEEeqnarray*}
where the perturbation level ranges over $\bar w\in \{0,.1,.2\}$. 
Using the control input
$u=-\intcc{1/\tau^2I_2\;3/2\tau^2I_2}^\top$, all states in the unit cube of the unperturbed system can be steered to the origin
in two steps without leaving the unit cube. Hence, a constant which
satisfies~\eqref{e:delta} is given by $c=1$. 
Moreover, in the subsequent computation of the outer approximation of the maximal controlled invariant set we can use $R_i\subseteq
R_{i+2}+\varepsilon\B$ as stopping criterion.

In the conducted experiments, in addition to the different perturbation levels
$\bar w\in\{0,.1,.2\}$,  we vary  the number of obstacles $p\in\{0,5,10\}$. The
approximation accuracy is set to $\varepsilon=0.01$.
For each computation, we display in Figure~\ref{f:ex2} the run-times of the
computation to determine the set $R_i$ as well as the numbers of halfspaces
$\#R_i$ used to represent the set $R_i$. The number of iterations until
termination can be deduced by the last shown data-point. For example, for $\bar
w=.1$ and $p=5$ ({\tikz[baseline=-0.5ex]{\node {\protect\includegraphics[width=.5cm]{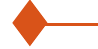}};}}),
we see only three data-points in the upper, middle subplot, which indicates that
at time $i=3$ the termination criterion $R_3\subseteq R_1+0.01\B$ holds.
Although, the worst case estimates predict that the number of halfspaces
necessary to represent the sets $R_i$ increases exponentially with the number of
iterations, see e.g.~\cite{Baotic09}, we do not observe such an increase in our
experiments and are therefore able to successfully approximate $R(X)$ for this example.

\begin{figure}[t]
  \begin{tikzpicture}
    \node at (.75,0) {\includegraphics[width=8cm]{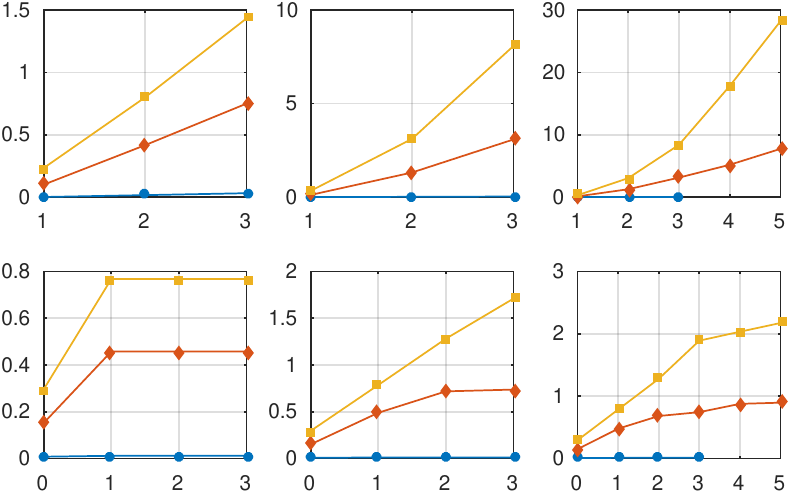}};
    \node[rotate=90] at (-3.6,1.5) {\footnotesize $t$[min]};
    \node[rotate=90] at (-3.6,-1) {\footnotesize $\#R_i/10^3$};
    \node at (-1.75,2.725) {\footnotesize $\bar w=0$};
    \node at  ( 1,2.725) {\footnotesize $\bar w=0.1$};
    \node at  (3.75,2.725) {\footnotesize $\bar w=0.2$};
  \end{tikzpicture}%
  \caption{Computation numbers for varying perturbation levels $\bar w\in\{0,.1,.2\}$ and number of obstacles
    $p=0$ ({\protect \tikz[baseline=-0.5ex]{\protect\node {\protect\includegraphics[width=.5cm]{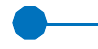}};}}),
    $p=5$ ({\protect \tikz[baseline=-0.5ex]{\protect\node {\protect\includegraphics[width=.5cm]{legend2}};}}),
    $p=10$ ({\protect \tikz[baseline=-0.5ex]{\protect\node {\protect\includegraphics[width=.5cm]{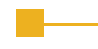}};}}).
    The upper row shows the run-times for the computation of the sets $R_i$. The lower row shows the number of half-spaces used to represent the sets $R_i$.}
    \label{f:ex2}
\end{figure}

All the computations were conducted on a single core of an Intel i7 $3.5$GHz CPU with $32$GB
memory, using MATLAB and the freely available {\tt Multi-Parametric Toolbox} \url{http://people.ee.ethz.ch/~mpt/2/}, which
provides all the polyhedral operations, necessary to compute the set iterates
$R_i$ and to check the set inclusion~\eqref{e:stop1}.

\newpage
\printbibliography
\end{document}